\theoremstyle{plain}
\newtheorem{theorem}{Theorem}
\newtheorem{lemma}{Lemma}
\newcommand{\oset}[3][0.5ex]{
	\mathrel{\mathop{#3}\limits^{
			\vbox to#1{\kern0.2\ex@
				\hbox{$\scriptstyle#2$}\vss}}}}
\newcommand{\uset}[3][0ex]{%
	\mathrel{\mathop{#3}\limits^{
			\vbox to#1{\kern5.5\ex@
				\hbox{$\scriptstyle#2$}\vss}}}}
\newcommand{\eqd}{\oset{\mathrm{\scriptscriptstyle{D}\,}}{=}}
\newcommand{\eqas}{\oset{\mathrm{\scriptscriptstyle{a.s.}\,}}{=}}
\DeclareMathOperator{\ran}{ran}
	\author{{\normalsize Brendan K.\ Beare} \\ {\normalsize School of Economics} \\ {\normalsize University of Sydney} \\ {\normalsize \href{mailto:brendan.beare@sydney.edu.au}{brendan.beare@sydney.edu.au}} \and {\normalsize Tetsuya Kaji} \\ {\normalsize Booth School of Business} \\ {\normalsize University of Chicago} \\ {\normalsize \href{mailto:tkaji@chicagobooth.edu}{tkaji@chicagobooth.edu}}}
\begin{document}

	\title{Convergence in distribution of the P-P process in $L^1[0,1]$}
	
	\twocolumn[{
	\begin{@twocolumnfalse}
	\maketitle
	\begin{abstract}
		We show that the percentile-percentile (P-P) process constructed from an independent and identically distributed sample of pairs converges in distribution in $L^1[0,1]$ if and only if the associated P-P curve is absolutely continuous. When this condition holds, the limiting distribution is Gaussian and the process admits a valid bootstrap approximation.
		\bigskip
	\end{abstract}
\end{@twocolumnfalse}
}]
\section{Main result and discussion}\label{sec:intro}

Let $\{(X_i,Y_i)\}_{i=1}^\infty$ be an independent and identically distributed (iid) sequence of pairs of random variables. Let $F:\mathbb R\to[0,1]$ and $G:\mathbb R\to[0,1]$ be the marginal cumulative distribution functions (cdfs) for $X_i$ and $Y_i$ respectively. Let $Q:(0,1)\to\mathbb R$ be the quantile function (qf) for $Y_i$; that is,
\begin{equation*}
	Q(u)=\inf\{y\in\mathbb R:G(y)\geq u\}.
\end{equation*}	
For each $n\in\mathbb N$, define the empirical cdfs $F_n:\mathbb R\to[0,1]$ and $G_n:\mathbb R\to[0,1]$ by
\begin{equation*}
	F_n(x)=\frac{1}{n}\sum_{i=1}^n\mathbbm 1(X_i\leq x),\quad G_n(y)=\frac{1}{n}\sum_{i=1}^n\mathbbm 1(Y_i\leq y),
\end{equation*}
and define the empirical qf $Q_n:(0,1)\to\mathbb R$ by
\begin{equation*}
	Q_n(u)=\inf\{y\in\mathbb R:G_n(y)\geq u\}.
\end{equation*}	
Define $R:[0,1]\to[0,1]$ and $R_n:[0,1]\to[0,1]$ by
\[
R(u) = F(Q(u)) \,\,\, \text{and} \,\,\, R_n(u) = F_n(Q_n(u))\,\,\, \text{for } u \in (0,1),
\]
\[
\begin{alignedat}{3}
	R(0)   & = \lim_{u\downarrow 0} R(u), \qquad R_n(0) & = \lim_{u\downarrow 0} R_n(u),\\
	R(1)   & = \lim_{u\uparrow 1} R(u), \qquad R_n(1) & = \lim_{u\uparrow 1} R_n(u).
\end{alignedat}
\]
We refer to $R$, $R_n$ and $\sqrt{n}(R_n-R)$ as the P-P curve, P-P plot and P-P process, respectively. The P-P curve is also known as the ordinal dominance curve or the receiver operating characteristic curve.

Our main result is the following theorem.

\begin{theorem}\label{thm:iff}
	$\sqrt{n}(R_n-R)$ converges in distribution in $L^1[0,1]$ if and only if $R$ is absolutely continuous.
\end{theorem}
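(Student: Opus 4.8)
The plan is to regard the P-P curve as the image of the marginal distribution functions under the map $\Phi\colon(A,B)\mapsto A\circ B^{-1}$, where $B^{-1}$ denotes the generalized (left-continuous) inverse, so that $R=\Phi(F,G)$, $R_n=\Phi(F_n,G_n)$, and $G^{-1}=Q$. The ``if'' direction will then follow from the functional delta method once one establishes the key analytic lemma: \emph{if $R=\Phi(F,G)$ is absolutely continuous, then $\Phi$, regarded as a map into $L^1[0,1]$ from an appropriate domain of pairs of distribution functions, is Hadamard differentiable at $(F,G)$ tangentially to the subspace supporting the Gaussian limit of $(\sqrt n(F_n-F),\sqrt n(G_n-G))$, with derivative} $\dot\Phi_{(F,G)}(\alpha,\beta)(u)=\alpha(Q(u))-R'(u)\,\beta(Q(u))$. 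Proving this lemma is the analytic core of the argument and the step I expect to be hardest: one must show that the $L^1$ remainder of the first-order expansion vanishes, and this is exactly where absolute continuity of $R$ is needed, to neutralize the awkward interactions (atoms of $F$ that lie on the range of $Q$; flat spots and jumps of $Q$); one must also separately control the endpoints $u\in\{0,1\}$, where $R$ and $R_n$ are defined as one-sided limits. Conversely — and by a direct argument rather than via the lemma — I will show that when $R$ fails to be absolutely continuous the P-P process is not asymptotically tight in $L^1[0,1]$, hence cannot converge in distribution.

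For the ``if'' direction, recall that since $(X_i,Y_i)$ is i.i.d., the bivariate process $(\sqrt n(F_n-F),\sqrt n(G_n-G))$ converges in distribution, in the product of two copies of $\ell^\infty([-\infty,\infty])$, to a tight mean-zero Gaussian pair $(\mathbb F,\mathbb G)$ whose covariance is determined by $F$, $G$ and the copula of $(X_i,Y_i)$ and whose law sits on the relevant separable subspace. Granting the lemma, the functional delta method gives $\sqrt n(R_n-R)=\sqrt n(\Phi(F_n,G_n)-\Phi(F,G))\Rightarrow\dot\Phi_{(F,G)}(\mathbb F,\mathbb G)$ in $L^1[0,1]$; the limit does take values in $L^1[0,1]$ almost surely, since $\mathbb F\circ Q$ is bounded while $\mathbb E\int_0^1 R'(u)\lvert\mathbb G(Q(u))\rvert\,du\le\mathbb E\lVert\mathbb G\rVert_\infty\,(R(1)-R(0))<\infty$. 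The remaining hypotheses of the delta method — that $(F_n,G_n)$ lies in the domain of $\Phi$ and that $(\mathbb F,\mathbb G)$ concentrates on the tangent space — are routine. The same Hadamard differentiability, combined with the bootstrap form of the delta method, also yields the bootstrap approximation announced in the abstract.

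For the ``only if'' direction I prove the contrapositive. By the Fréchet–Kolmogorov criterion a subset of $L^1[0,1]$ is relatively compact only if it is uniformly $L^1$-equicontinuous under translation (functions extended by zero off $[0,1]$), so by Prokhorov, if $\sqrt n(R_n-R)$ converged in distribution in $L^1[0,1]$ then for every $\eta,\varepsilon>0$ there would be $h_0>0$ with $\limsup_n\mathbb P\bigl(\sup_{\lvert h\rvert\le h_0}\lVert\sqrt n(R_n-R)(\cdot+h)-\sqrt n(R_n-R)\rVert_1>\varepsilon\bigr)<\eta$. Now suppose $R$ is not absolutely continuous. If $R$ has a jump of size $\delta>0$ at some $u_0\in(0,1)$, then — whether the jump arises from a gap in the support of $Y$ across which $X$ places mass $\delta$, or from an atom of $X$ of mass $\delta$ through which $Q$ passes — the P-P plot $R_n$ reproduces an increase of size converging to $\delta$ at a single random location $u_0+O_{\mathbb P}(n^{-1/2})$ whose rescaled displacement converges to a nondegenerate Gaussian. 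Thus $\sqrt n(R_n-R)$ carries a spike of height of order $\sqrt n$ supported on an interval of length of order $n^{-1/2}$, and an elementary computation shows that for every fixed $h_0>0$ the contribution of this spike to the translation modulus is, with probability bounded away from zero as $n\to\infty$, bounded below by a fixed positive multiple of $\delta$ — contradicting the displayed equicontinuity. If instead $R$ has a nonzero singular continuous part, I cover the mass of that part by finitely many short open intervals whose total Lebesgue length is below any prescribed $\varepsilon$ but across which $R$, and hence eventually $R_n$, rises by a total amount at least the (fixed, positive) mass of the singular part, each interval again being displaced by $O_{\mathbb P}(n^{-1/2})$; this produces a family of spikes living on a set of arbitrarily small Lebesgue measure but with total $L^1$ mass bounded away from zero, so the translation modulus again fails to be small. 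The bookkeeping in this last case — bounding the overlaps among the spikes and the contribution of the absolutely continuous part of $R$ on the cover — is the most delicate part of this direction.
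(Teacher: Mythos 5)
Your high-level architecture is reasonable, but in both directions the step you defer is the entire content of the theorem, and in the necessity direction the deferred step does not obviously close. For sufficiency, you state but do not prove Hadamard differentiability of $(A,B)\mapsto A\circ B^{-1}$ at $(F,G)$ into $L^1[0,1]$. Note that the paper does \emph{not} attempt this directly: it first passes to uniform margins via the copula (working with $\tilde F_n\circ R\circ\tilde Q_n$, which is shown to be equal in distribution to $R_n$), so that the inverse map is differentiated at the identity (van der Vaart, Lemma 21.4(ii)) and all of the irregularity of $F$ and $G$ is absorbed into a single composition $\phi(A,B)=B\circ A$ differentiated at $(I,R)$ with $R$ absolutely continuous. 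The resulting Lemma \ref{lem:comp} is the novel analytic work (an $L^1$ approximation of the density $r$ by continuous functions plus change-of-variables/Fubini estimates), and it is precisely what replaces the unavailable uniform-differentiability hypothesis of Lemma 3.10.28 in van der Vaart and Wellner. Your version of the lemma, stated at a general $(F,G)$ where $G$ may have atoms and flat spots and $F$ need have no density, is strictly harder than what the paper proves and is not obviously true as formulated; without the copula-scale reduction you have no route to it.

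For necessity you take a genuinely different route (failure of relative compactness via the Riesz--Kolmogorov translation modulus, rather than the paper's derivation of the inequality $\mathbb E\int_{[a,b]}\lvert\mathcal B_2\rvert\,\mathrm dR\le\mathbb E\int_a^b\lvert\mathcal S\rvert+\sqrt{\pi/2}\,(b-a)$, which directly forces $\mu_R\ll\lambda$). The jump case of your argument is plausible. The singular-continuous case, however, has a real gap: covering the singular mass by intervals of total length $\varepsilon$ and observing that both $R$ and $R_n$ rise by roughly the singular mass across them says nothing about $\lVert\sqrt n(R_n-R)\rVert_{L^1(A_\varepsilon)}$, since the two increments could cancel. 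What you actually need is a quantitative lower bound of the form $\liminf_n\mathbb E\sqrt n\int_A\lvert R_n-R\rvert\ge\mathbb E\int_A\lvert\mathcal B_2\rvert\,\mathrm dR$ (or an in-probability analogue), uniformly over small neighbourhoods $A$ of the singular support; this is exactly the hard estimate the paper imports from Lemma 3.4 of the companion quantile-process paper, applied to the quantile process of $\{F(Y_i)\}$ via Lemma \ref{lem:Rbar}, and your displacement heuristic does not supply it. You also never separate and control the second source of randomness, $\sqrt n(F_n-F)\circ Q_n$, which is present in the P-P process but absent from the pure quantile picture your spikes describe; the paper bounds it by $\sqrt n\lVert F_n-F\rVert_\infty$ and the Dvoretzky--Kiefer--Wolfowitz inequality, which is where the $\sqrt{\pi/2}\,(b-a)$ slack term comes from. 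Until both of these estimates are in place, the necessity direction is a picture rather than a proof.
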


Here $L^1[0,1]$ denotes the usual space of Lebesgue measurable and integrable real-valued functions on $[0,1]$, equipped with the norm $\lVert\cdot\rVert_1$. Convergence in distribution ($\rightsquigarrow$) in a metric space is understood in the standard sense of \citet[pp.~258--259, 333]{V98}. As usual, functions in $L^1[0,1]$ that are equal almost everywhere (a.e.) are identified. 

When $R$ is absolutely continuous, the limit in distribution of the P-P process in $L^1[0,1]$ is a centered Gaussian process $\mathcal R:[0,1]\to\mathbb R$ whose covariance structure depends on $R$ and on the dependence between the paired observations. Let $C:[0,1]^2\to[0,1]$ be a copula for $(X_i,Y_i)$; that is, a bivariate cdf on $[0,1]^2$ with uniform margins satisfying
\begin{equation*}
	C(F(x),G(y))=\mathbb P(X_i\leq x,Y_i\leq y).
\end{equation*}
The copula $C$ is uniquely determined on $\overline{\ran(F)}\times\overline{\ran(G)}$, by Sklar's theorem (\citealp{S59}; see also \citealp{G24}). Let $\mathcal B:[0,1]^2\to\mathbb R$ be a centered Gaussian process with covariance kernel
\begin{equation*}
	\mathrm{Cov}(\mathcal B(u,v),\mathcal B(u',v'))=C(u\wedge u',v\wedge v')-C(u,v)C(u',v').
\end{equation*}
The process $\mathcal B$ is called a tied-down Brownian sheet with intensity measure $C$ \citep[p.~52]{GS87}. The marginal processes $\mathcal B_1:[0,1]\to\mathbb R$ and $\mathcal B_2:[0,1]\to\mathbb R$ defined by $\mathcal B_1(u)=\mathcal B(u,1)$ and $\mathcal B_2(u)=\mathcal B(1,u)$ are Brownian bridges. If $R$ is absolutely continuous then it admits an a.e.\ derivative $r\in L^1[0,1]$, and the distributional limit $\mathcal R$ of the P-P process in $L^1[0,1]$ is given by
\begin{equation*}
	\mathcal R(u)=\mathcal B_1(R(u))-r(u)\mathcal B_2(u).
\end{equation*}
The values taken by $r$ at points where $R$ is not differentiable are immaterial; for definiteness one may set $r(u)=1$ on that null set.

Absolute continuity of $R$ does not imply continuity of $F$ and $G$, so the copula $C$ and the covariance kernel of $\mathcal B$ need not be uniquely determined on all of $[0,1]^2$. However, $\mathcal R$ depends on $\mathcal B$ only through its values on $\overline{\ran(F)}\times\overline{\ran(G)}$. Indeed, $\ran(R)\subseteq\overline{\ran(F)}$, and $r(u)=0$ whenever $u\notin\overline{\ran(G)}$. Hence the distribution of $\mathcal R$ is uniquely determined by the construction above, even when $F$ or $G$ is discontinuous.

Let $\ell^\infty[0,1]$ be the space of bounded real-valued functions on $[0,1]$ equipped with the uniform norm. Prior work has identified sufficient conditions for the P-P process to converge in distribution in $\ell^\infty[0,1]$. Absolute continuity of $R$ is not sufficient for such convergence. Indeed, if $r$ is not locally bounded on $(0,1)$ then almost every sample path of $\mathcal R$ is unbounded. To circumvent this issue, prior research relying on convergence in distribution of the P-P process in $\ell^\infty[0,1]$ has sometimes required $R$ to have a bounded derivative on $[0,1]$. See, for instance, \cite{BM15}, \cite{TWT17}, \cite{BS19} and \cite{WT21}. This requirement is convenient, but it excludes some basic examples. For instance, if $F$ and $G$ are normal with different means and equal variances, or with $G$ having a smaller variance than $F$, then $R$ is continuously differentiable on $(0,1)$ but the derivative $r$ diverges to infinity at one or both endpoints. \citet[Theorem 2.2]{HT96} accommodate this feature of the normal case by assuming only that $r$ is locally bounded on $(0,1)$, not necessarily bounded, and asserting convergence in distribution of the P-P process only in the truncated space $\ell^\infty[a,b]$ where $0<a<b<1$.

To obtain convergence in distribution of the P-P process in $\ell^\infty[0,1]$ when $r$ is locally bounded on $(0,1)$ but unbounded near an endpoint, additional control of the endpoint behavior is needed. \citet[Theorem 3.1]{ACH87} impose such control through a \v{C}ibisov--O'Reilly condition, derived from earlier work of \cite{C64} and \cite{O74} on weighted approximation of empirical processes. Their subsequent discussion indicates that this type of condition is close to minimal.

Theorem \ref{thm:iff} above contrasts with Theorem 3.1 in \cite{ACH87} because it includes no counterpart to the \v{C}ibisov--O'Reilly condition. The reason is that Theorem \ref{thm:iff} is concerned with convergence in distribution in $L^1[0,1]$, and no assumption on $r$ is needed to ensure that $\mathcal R$ has a.s.\ integrable sample paths. The density $r$ is always integrable because $R$ is nondecreasing and bounded; therefore, since $\mathcal B_1$ and $\mathcal B_2$ have a.s.\ bounded sample paths, it follows that $\mathcal R$ has a.s.\ integrable sample paths. While convergence in distribution of the P-P process in $L^1[0,1]$ is weaker than convergence in distribution in $\ell^\infty[0,1]$, it may be enough to establish good behavior of statistics representable as integral-type functionals of the P-P plot. See \cite{BC26} for an application of Theorem \ref{thm:iff} involving a Wilcoxon-Mann-Whitney statistic.

Theorem \ref{thm:iff} complements the main result of \cite{BK26}, which gives a necessary and sufficient condition for convergence in distribution of the quantile process $\sqrt{n}(Q_n-Q)$ in $L^1(0,1)$. The condition is that $Q$ is locally absolutely continuous and satisfies
\begin{equation*}
	\int_0^1\sqrt{u(1-u)}\,\mathrm{d}Q(u)<\infty.
\end{equation*}
In \cite{BK26}, sufficiency is established by the delta-method. The proof given here for the P-P process also uses the delta-method, but it does not rely on the results of \cite{BK26}. The argument involves two applications of the delta-method: one for the generalized inverse map from cdfs to qfs, and one for the composition of a cdf with a qf. For the generalized inverse map we use the standard Hadamard differentiability result stated in \citet[p.~307]{V98}, which goes back to \cite{V72} and \cite{R76}. Composition requires more work. Standard results on Hadamard differentiability of composition, such as the one stated in \citet[p.~534]{VW23}, assume a uniform differentiability condition stronger than absolute continuity. We therefore prove a tailored lemma in which composition is treated as a map into $L^1[0,1]$. This is Lemma \ref{lem:comp}.

Sections \ref{sec:sufficiency} and \ref{sec:necessity} establish, respectively, the sufficiency and necessity of absolute continuity for convergence in distribution. Section \ref{sec:indep} briefly indicates the modifications needed to handle P-P plots computed from two independent samples of different sizes rather than from a single sample of pairs.

\section{Sufficiency of absolute continuity}\label{sec:sufficiency}

In this section we prove that absolute continuity of $R$ is sufficient for convergence in distribution of the P-P process in $L^1[0,1]$. We also establish a corresponding bootstrap approximation. Both conclusions follow from the same underlying argument based on the delta-method.

Let $W_n=(W_{1,n},\ldots,W_{n,n})$ be a multinomial random vector with equal probabilities over the categories $1,\ldots,n$, independent of $\{(X_i,Y_i)\}_{i=1}^n$. Define the bootstrap empirical cdfs $F^\ast_n:\mathbb R\to[0,1]$ and $G_n^\ast:\mathbb R\to[0,1]$ by
\begin{equation*}
	F_n^\ast(x)=\frac{1}{n}\sum_{i=1}^nW_{i,n}\mathbbm 1(X_i\leq x),\,\,\, G_n^\ast(y)=\frac{1}{n}\sum_{i=1}^nW_{i,n}\mathbbm 1(Y_i\leq y).
\end{equation*}
Let $Q_n^\ast:(0,1)\to\mathbb R$ be the qf corresponding to $G_n^\ast$, and define $R_n^\ast:[0,1]\to[0,1]$ by
\[
R_n^\ast(u) = F_n^\ast(Q_n^\ast(u)) \,\,\, \text{for } u \in (0,1),
\]
\[
R_n^\ast(0)   = \lim_{u\downarrow 0} R_n^\ast(u), \qquad R_n^\ast(1) = \lim_{u\uparrow 1} R_n^\ast(u).
\]

\begin{theorem}[Sufficiency]\label{thm:sufficiency}
	If $R$ is absolutely continuous then $\sqrt{n}(R_n-R)\rightsquigarrow\mathcal R$ in $L^1[0,1]$, and $\sqrt{n}(R_n^\ast-R_n)\rightsquigarrow\mathcal R$ in $L^1[0,1]$ conditionally on $\{(X_i,Y_i)\}_{i=1}^\infty$ in probability.
\end{theorem}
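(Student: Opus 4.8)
The plan is to reduce to a two-term decomposition in which absolute continuity of $R$ is the only possible point of failure, and then to work throughout in $L^1[0,1]$ rather than in a uniform norm. By Sklar's theorem together with an auxiliary independent randomization to accommodate atoms of $G$, construct an iid uniform sequence $\{V_i\}_{i=1}^\infty$ on $[0,1]$, jointly with $\{(X_i,Y_i)\}_{i=1}^\infty$, with $Y_i=Q(V_i)$ a.s.; writing $\mathbb V_n$ and $\mathbb V_n^{-1}$ for the empirical cdf and qf of $\{V_i\}_{i=1}^n$, using that $Q$ is nondecreasing and that sorting $\{V_i\}$ sorts $\{Y_i\}$ compatibly, one has $Q_n=Q\circ\mathbb V_n^{-1}$, hence $F\circ Q_n=R\circ\mathbb V_n^{-1}$. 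Since $F\circ Q=R$ on $(0,1)$, for $u\in(0,1)$ --- and thus as an identity in $L^1[0,1]$, the endpoint conventions being immaterial on the null set $\{0,1\}$ ---
\begin{equation*}
\sqrt n(R_n-R)=\underbrace{\sqrt n\,(F_n-F)\circ Q_n}_{=:A_n}\;+\;\underbrace{\sqrt n\bigl(R\circ\mathbb V_n^{-1}-R\bigr)}_{=:B_n}.
\end{equation*}
The driving input is the Donsker theorem for the bivariate empirical process of $\{(X_i,Y_i)\}$ together with Vervaat's lemma: these yield joint weak convergence of $\bigl(\sqrt n(F_n-F),\,\sqrt n(\mathbb V_n^{-1}-\mathrm{id})\bigr)$ in $\ell^\infty(\overline{\mathbb R})\times\ell^\infty[0,1]$ to $(\mathcal B_1\circ F,\,-\mathcal B_2)$, where $\mathcal B_1,\mathcal B_2$ are the marginal processes of $\mathcal B$ (the uniqueness of $C$ on $\overline{\ran F}\times\overline{\ran G}$ pinning down the cross-covariance), and in particular $\lVert\mathbb V_n^{-1}-\mathrm{id}\rVert_\infty=O_{\mathrm P}(n^{-1/2})$. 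By the almost sure representation theorem I may and will assume these convergences hold almost surely.

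For $A_n$: since $\mathbb V_n^{-1}\to\mathrm{id}$ uniformly, $Q_n=Q\circ\mathbb V_n^{-1}\to Q$ at every continuity point of $Q$, and since $R$ is absolutely continuous it is \emph{continuous}, so a brief argument --- distinguishing whether $Q(u)$ is an atom of $G$ (in which case $Q_n(u)=Q(u)$ eventually) or a continuity point of $F$ --- shows $F(Q_n(u))\to F(Q(u))=R(u)$ for a.e.\ $u$. With $\sqrt n(F_n-F)\to\mathcal B_1\circ F$ uniformly and $\mathcal B_1$ continuous, this gives $A_n(u)\to\mathcal B_1(R(u))$ for a.e.\ $u$; as $\sup_n\lVert\sqrt n(F_n-F)\rVert_\infty<\infty$ supplies an integrable dominating function, dominated convergence gives $A_n\to\mathcal B_1\circ R$ in $L^1[0,1]$.

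For $B_n$, which is where absolute continuity is essential, use it to write $B_n(u)=\sqrt n\int_u^{\mathbb V_n^{-1}(u)}r(t)\,dt$, so that, extending $r$ by zero off $[0,1]$,
\begin{equation*}
B_n(u)-r(u)\,\sqrt n\bigl(\mathbb V_n^{-1}(u)-u\bigr)=\sqrt n\int_u^{\mathbb V_n^{-1}(u)}\bigl(r(t)-r(u)\bigr)\,dt.
\end{equation*}
Writing $K_n:=\sqrt n\lVert\mathbb V_n^{-1}-\mathrm{id}\rVert_\infty$, which is a.s.\ bounded in $n$ on the representation space, the integration runs over an interval of length at most $K_n/\sqrt n$ about $u$; substituting $t=u+s$ and applying Tonelli bounds the $L^1[0,1]$ norm of the right-hand side by $2K_n\sup_{|s|\le K_n/\sqrt n}\lVert r(\cdot+s)-r\rVert_1\to0$, by continuity of translation in $L^1$. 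Since $h\mapsto r\cdot h$ is a bounded linear map $\ell^\infty[0,1]\to L^1[0,1]$, we get $r\cdot\sqrt n(\mathbb V_n^{-1}-\mathrm{id})\to-r\mathcal B_2$ in $L^1[0,1]$, hence $B_n\to-r\mathcal B_2$ in $L^1[0,1]$. Adding the pieces, $\sqrt n(R_n-R)=A_n+B_n\to\mathcal B_1\circ R-r\mathcal B_2=\mathcal R$ almost surely in $L^1[0,1]$, so $\sqrt n(R_n-R)\rightsquigarrow\mathcal R$.

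For the bootstrap, use the parallel decomposition
\begin{equation*}
\sqrt n(R_n^\ast-R_n)=\sqrt n(F_n^\ast-F_n)\circ Q_n^\ast+\sqrt n\bigl(R\circ\mathbb V_n^{\ast-1}-R\circ\mathbb V_n^{-1}\bigr)+\sqrt n\bigl((F_n-F)\circ Q_n^\ast-(F_n-F)\circ Q_n\bigr),
\end{equation*}
with $\mathbb V_n^{\ast-1}$ the multinomially reweighted empirical qf of $\{V_i\}_{i=1}^n$ and $Q_n^\ast=Q\circ\mathbb V_n^{\ast-1}$. The last term is $o_{\mathrm P}(1)$ in $L^1[0,1]$ by the argument used for $A_n$ (using $Q_n^\ast\to Q$, which follows from $\lVert\mathbb V_n^{\ast-1}-\mathrm{id}\rVert_\infty=O_{\mathrm P}(n^{-1/2})$, together with continuity of $\mathcal B_1$), and the first two terms are treated exactly as $A_n$ and $B_n$, now conditionally on $\{(X_i,Y_i)\}_{i=1}^\infty$ and invoking the conditional-in-probability Donsker theorems for the multinomial bootstrap empirical and uniform quantile processes. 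The main obstacle is the $B_n$ step: proving that $\sqrt n(R\circ\mathbb V_n^{-1}-R)$ is asymptotically linear in $\sqrt n(\mathbb V_n^{-1}-\mathrm{id})$ \emph{in the $L^1$ norm}. This is precisely a Hadamard-type differentiability of composition with $R$ that fails in the uniform norm when the density $r$ is unbounded but holds in $L^1$ for every absolutely continuous $R$; making the uniform $O_{\mathrm P}(n^{-1/2})$ control of the quantile process interact with $L^1$-continuity of translations --- and propagating it cleanly to the bootstrap --- is the heart of the argument.
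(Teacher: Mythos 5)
Your argument is essentially correct and reaches the same limit, but it takes a genuinely different route from the paper. The paper casts everything as a functional delta-method chain: bivariate Donsker theorem, continuous mapping, Hadamard differentiability of the generalized inverse, and then a new Hadamard differentiability lemma for the composition map $(A,B)\mapsto B\circ A$ at $(I,R)$ into $L^1[0,1]$ (its Lemma~\ref{lem:comp}), which is the paper's main technical contribution; the identification $R_n\eqd\tilde F_n\circ R\circ\tilde Q_n$ is made via an equality in distribution rather than your almost-sure coupling $Y_i=Q(V_i)$. Your decomposition $\sqrt n(R_n-R)=\sqrt n(F_n-F)\circ Q_n+\sqrt n(R\circ\mathbb V_n^{-1}-R)$ together with an almost-sure representation unpacks the delta method by hand: your $B_n$ step, controlling $\sqrt n\int_u^{\mathbb V_n^{-1}(u)}(r(t)-r(u))\,\mathrm dt$ in $L^1$ via Tonelli and continuity of translations in $L^1$, is exactly the analytic content of the paper's Lemma~\ref{lem:comp} (where the same effect is achieved by approximating the density by a continuous function), specialized to the quantile-process perturbations. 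What you gain is concreteness and the avoidance of any abstract differentiability framework; what the paper's formulation buys is a reusable lemma stated for arbitrary perturbation sequences in $\ell^\infty[0,1]$, and, more importantly, a clean mechanism for the bootstrap assertion: Theorem~23.9 of van der Vaart plus the B\"ucher--Kojadinovic equivalence between conditional convergence in probability and joint unconditional convergence with two independent weight vectors.

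The one place where your proposal is materially thinner than the paper is the second assertion. Almost-sure representation does not transfer directly to conditional weak convergence in probability, so ``treated exactly as $A_n$ and $B_n$, now conditionally'' hides real work: you would need to show that the linearization errors are unconditionally $o_{\mathrm P}(1)$ in $L^1[0,1]$ (which then implies they are conditionally negligible in probability), that conditional convergence is preserved under the bounded linear map $h\mapsto r\cdot h$ and under addition of the two conditionally convergent pieces \emph{jointly} with the correct dependence, and that the coupling $Q_n^\ast=Q\circ\mathbb V_n^{\ast-1}$ propagates through the conditional statement. None of these steps fails, but writing them out rigorously essentially reconstructs the bootstrap delta method and the joint-unconditional characterization that the paper invokes; as written, this part of your proof is a correct plan rather than a proof.
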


We prove this theorem by combining Donsker's theorem with the delta-method and its bootstrap analogue; see Theorems 20.8 and 23.9 in \cite{V98}. The main technical point is to verify Hadamard differentiability of a composition map under absolute continuity alone. Standard composition results, such as Lemma 3.10.28 in \cite{VW23}, are not directly applicable here because they require a stronger uniform differentiability condition. We therefore begin with a tailored lemma for composition as a map into $L^1[0,1]$.

For two normed spaces $\mathcal X$ and $\mathcal Y$ we denote by $\mathcal X\otimes\mathcal Y$ the product space equipped with the max-norm. Define subsets $\mathbb D_1$ and $\mathbb D_2$ of $\ell^\infty[0,1]$ by
\begin{align*}
	\mathbb D_1&=\{A\in\ell^\infty[0,1]:\ran(A)\subseteq[0,1]\\
	&\quad\quad\text{ and }A\text{ is Lebesgue measurable}\}\\
	\text{and}\quad\mathbb D_2&=\{B\in\ell^\infty[0,1]:B\text{ is Borel measurable}\}.
\end{align*}
Define the composition map $\phi:\mathbb D_1\times\mathbb D_2\to L^1[0,1]$ by
\begin{equation*}
	\phi(A,B)(u)=B\circ A(u)=B(A(u)),\quad u\in[0,1].
\end{equation*}
The Lebesgue measurability of $A$ and Borel measurability of $B$ together guarantee that $B\circ A$ is Lebesgue measurable, and thus the boundedness of $B$ guarantees that $B\circ A\in L^1[0,1]$.

Let $C[0,1]$ be the space of continuous real-valued functions on $[0,1]$ equipped with the uniform norm, a subspace of $\ell^\infty[0,1]$. Let $I:[0,1]\to[0,1]$ be the identity map.

\begin{lemma}[Hadamard differentiability of composition]\label{lem:comp}
	Let $B:[0,1]\to\mathbb R$ be absolutely continuous with density $b:[0,1]\to\mathbb R$. Then the composition map $\phi:\mathbb D_1\times\mathbb D_2\subset\ell^\infty[0,1]\otimes\ell^\infty[0,1]\to L^1[0,1]$ is Hadamard differentiable at $(I,B)$ tangentially to $C[0,1]\times C[0,1]$. The derivative $\phi'_{I,B}:C[0,1]\times C[0,1]\to L^1[0,1]$ is given by $\phi'_{I,B}(\alpha,\beta)=\beta+b\alpha$.
\end{lemma}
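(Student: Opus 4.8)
The plan is to verify the definition of Hadamard differentiability directly. Fix $(\alpha,\beta)\in C[0,1]\times C[0,1]$, a sequence $t_n\downarrow 0$, and sequences $\alpha_n\to\alpha$ and $\beta_n\to\beta$ in $\ell^\infty[0,1]$ with $A_n:=I+t_n\alpha_n\in\mathbb D_1$ and $B_n:=B+t_n\beta_n\in\mathbb D_2$ for every $n$; since $(\alpha_n)$ converges it is bounded, so $M:=\sup_n\lVert\alpha_n\rVert_\infty<\infty$. From $B_n\circ A_n=B\circ A_n+t_n(\beta_n\circ A_n)$ we get the decomposition
\begin{equation*}
	\frac{\phi(A_n,B_n)-\phi(I,B)}{t_n}=\frac{B\circ A_n-B}{t_n}+\beta_n\circ A_n ,
\end{equation*}
so it suffices to show that the first term converges to $b\alpha$ and the second to $\beta$ in $L^1[0,1]$; linearity and boundedness of $(\alpha,\beta)\mapsto\beta+b\alpha$ from $C[0,1]\times C[0,1]$ into $L^1[0,1]$ are immediate (and $(I,B)\in\mathbb D_1\times\mathbb D_2$ since $B$ is continuous).

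The second term is the easy one: since $\lVert A_n-I\rVert_\infty=t_n\lVert\alpha_n\rVert_\infty\to 0$ and $\beta$ is uniformly continuous,
\begin{equation*}
	\lVert\beta_n\circ A_n-\beta\rVert_\infty\le\lVert\beta_n-\beta\rVert_\infty+\sup_{u\in[0,1]}|\beta(A_n(u))-\beta(u)|\to 0 ,
\end{equation*}
so $\beta_n\circ A_n\to\beta$ uniformly, hence in $L^1[0,1]$. For the first term, absolute continuity of $B$ gives $B(A_n(u))-B(u)=\int_u^{A_n(u)}b(s)\,ds$, and I would treat the resulting difference quotient by a density argument. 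Given $\epsilon>0$, choose $g\in C[0,1]$ with $\lVert b-g\rVert_1<\epsilon$. The crucial estimate is that the difference-quotient operator is bounded on $L^1[0,1]$ uniformly in $n$: writing $I_u$ for the closed subinterval of $[0,1]$ with endpoints $u$ and $A_n(u)$, one has $s\in I_u\Rightarrow|u-s|\le t_n|\alpha_n(u)|\le t_nM$, so $\lambda\{u\in[0,1]:s\in I_u\}\le 2t_nM$, and Tonelli's theorem gives
\begin{equation*}
	\int_0^1\Bigl|\frac1{t_n}\int_u^{A_n(u)}\bigl(b(s)-g(s)\bigr)\,ds\Bigr|\,du\le\frac1{t_n}\int_0^1|b(s)-g(s)|\,\lambda\{u:s\in I_u\}\,ds\le 2M\lVert b-g\rVert_1<2M\epsilon .
\end{equation*}
Likewise $\lVert b\alpha-g\alpha\rVert_1\le\lVert\alpha\rVert_\infty\epsilon$. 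Finally, for the continuous function $g$ the substitution $s=u+t_n\alpha_n(u)\tau$ yields
\begin{equation*}
	\frac1{t_n}\int_u^{A_n(u)}g(s)\,ds=\alpha_n(u)\int_0^1 g\bigl(u+t_n\alpha_n(u)\tau\bigr)\,d\tau ,
\end{equation*}
which converges to $\alpha(u)g(u)$ uniformly in $u$, since $g$ is uniformly continuous and $|t_n\alpha_n(u)\tau|\le t_nM\to 0$ while $\alpha_n\to\alpha$ uniformly. Combining the three bounds, $\limsup_n\lVert t_n^{-1}(B\circ A_n-B)-b\alpha\rVert_1\le(2M+\lVert\alpha\rVert_\infty)\epsilon$; letting $\epsilon\downarrow 0$ finishes the proof.

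The main obstacle—and the reason Lemma 3.10.28 of \citet{VW23} cannot be invoked directly—is that $b$ need only be integrable, not bounded, so there is no pointwise control of the difference quotient $t_n^{-1}(B\circ A_n-B)$; the device replacing such control is the uniform $L^1$-boundedness of the difference-quotient operator (the Tonelli estimate), which together with the density of $C[0,1]$ in $L^1[0,1]$ reduces matters to the continuous case handled by a change of variables. The remaining points—that $B\circ A_n$ and $\beta_n\circ A_n$ lie in $L^1[0,1]$ and that the integrand in the Tonelli step is jointly measurable—are routine, using that $A_n$ is Lebesgue measurable, that $B$ and $g$ are Borel measurable, and that $\beta_n=(B_n-B)/t_n$ is Borel measurable and bounded.
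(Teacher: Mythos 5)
Your proof is correct, and it follows the same basic strategy as the paper's---reduce to the difference quotient $t_n^{-1}(B\circ A_n-B)$, approximate the density $b$ in $L^1[0,1]$ by a continuous function, control the approximation error by a Fubini/Tonelli bound on the difference-quotient operator, and handle the continuous case by a change of variables---but your execution is noticeably more economical. The paper first replaces $\alpha_n$ by $\alpha$ inside the difference quotient (the $\zeta_n$ step, itself requiring a change of variables and dominated convergence), then splits $\zeta_n-b\alpha$ into a ``diagonal'' term involving $b(u)\smallint\chi_n(u,v)\,\mathrm dv-b(u)\alpha(u)$ and an ``off-diagonal'' term $\smallint\chi_n(u,v)(b(v)-b(u))\,\mathrm dv$, the latter needing three separate estimates after introducing the continuous approximant $b_\epsilon$. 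You avoid the intermediate replacement entirely by keeping $\alpha_n$ throughout (using only $M=\sup_n\lVert\alpha_n\rVert_\infty<\infty$), and your single Tonelli estimate $\lambda\{u:s\in I_u\}\le 2t_nM$ plays the role of the paper's bound on $\smallint_0^1\lvert\chi_n(u,v)\rvert\,\mathrm du$; the substitution $s=u+t_n\alpha_n(u)\tau$ then dispatches the continuous case in one stroke via uniform continuity of $g$. The one point worth making fully explicit (you flag it as routine, correctly) is the joint measurability of $(u,s)\mapsto\mathbbm 1(s\in I_u)\lvert b(s)-g(s)\rvert$ needed for Tonelli, which follows because $u\mapsto u\wedge A_n(u)$ and $u\mapsto u\vee A_n(u)$ are Lebesgue measurable; also note that $u+t_n\alpha_n(u)\tau\in[0,1]$ for $\tau\in[0,1]$ precisely because $A_n\in\mathbb D_1$, so $g$ is evaluated only where it is defined. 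Net effect: your argument proves the same statement with fewer moving parts, at the cost of no generality.
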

\begin{proof}
	Let $\{t_n\}_{n=1}^\infty$ be a sequence of positive real numbers such that $t_n\to0$. Let $\{\alpha_n\}_{n=1}^\infty$ be a sequence in $\ell^\infty[0,1]$ such that $I+t_n\alpha_n\in\mathbb D_1$ for all $n$ and such that $\alpha_n\to\alpha\in C[0,1]$. Let $\{\beta_n\}_{n=1}^\infty$ be a sequence in $\ell^\infty[0,1]$ such that $B+t_n\beta_n\in\mathbb D_2$ for all $n$ and such that $\beta_n\to\beta\in C[0,1]$. It suffices to show that $\lVert t_n^{-1}[(B+t_n\beta_n)\circ(I+t_n\alpha_n)-B]-\beta-b\alpha\rVert_1\to0$. By the triangle inequality, this norm is bounded by the following sum of three terms:
	\begin{multline*}
		\lVert(\beta_n-\beta)\circ(I+t_n\alpha_n)\rVert_1+\lVert\beta\circ(I+t_n\alpha_n)-\beta\rVert_1\\+\lVert t_n^{-1}[B\circ(I+t_n\alpha_n)-B]-b\alpha\rVert_1.
	\end{multline*}
	The first term converges to zero because $\beta_n$ converges uniformly to $\beta$.  The second term converges to zero by the dominated convergence theorem because $\beta$ is continuous. It remains to show that the third term converges to zero.
	
	Since $B$ is absolutely continuous with density $b$ we have, for each $u\in[0,1]$,
	\begin{multline*}
		t_n^{-1}[B(u+t_n\alpha_n(u))-B(u)]\\=t_n^{-1}\int_0^1[\mathbbm 1(v\leq u+t_n\alpha_n(u))-\mathbbm 1(v\leq u)]b(v)\,\mathrm{d}v.
	\end{multline*}
	Define the map $\zeta_n\in\mathbb D_2$ by
	\begin{equation*}
		\zeta_n(u)=t_n^{-1}\int_0^1[\mathbbm 1(v\leq u+t_n\alpha(u))-\mathbbm 1(v\leq u)]b(v)\,\mathrm{d}v.
	\end{equation*}
	Then by Fubini's theorem we have
	\begin{multline*}
		\lVert t_n^{-1}[B\circ(I+t_n\alpha_n)-B]-\zeta_n\rVert_1\\\leq t_n^{-1}\!\!\int_0^1\!\!\!\int_0^1\! \lvert\mathbbm 1(v\leq u+t_n\alpha_n(u))-\mathbbm 1(v\leq u+t_n\alpha(u))\rvert\lvert b(v)\rvert\mathrm{d}u\mathrm{d}v.
	\end{multline*}
	Using the change-of-variables $u\mapsto v-t_nw$ we may rewrite the right-hand side of the inequality as
	\begin{multline*}
		\int_0^1\!\int_{-\infty}^\infty\mathbbm 1(t_n^{-1}(v-1)\leq w\leq t_n^{-1}v)\,\bigl|\mathbbm 1(w\leq\alpha_n(v-t_nw))-\\\mathbbm 1(w\leq\alpha(v-t_nw))\bigr|\,\lvert b(v)\rvert\,\mathrm{d}w\,\mathrm{d}v.
	\end{multline*}
	The integrand, as a function of $v$ and $w$, is bounded by $\mathbbm 1(\lvert w\rvert\leq\lVert\alpha\rVert_\infty+1)\lvert b(v)\rvert$ for all sufficiently large $n$, and converges a.e.\ to zero because $\alpha_n$ converges uniformly to $\alpha$ and because $\alpha$ is continuous. (Convergence of the integrand to zero need not hold on the set $\{(v,w):w=\alpha(v)\}$, but this is a null set.) Thus the double integral over $w$ and $v$ converges to zero by the dominated convergence theorem. This shows that
	\begin{equation*}
		\lVert t_n^{-1}[B\circ(I+t_n\alpha_n)-B]-\zeta_n\rVert_1\to0.
	\end{equation*}	
	It therefore remains to show that $\lVert\zeta_n-b\alpha\rVert_1\to0$.
	
	Define the map $\chi_n:[0,1]^2\to\mathbb R$ by
	\begin{equation*}
		\chi_n(u,v)=t_n^{-1}[\mathbbm 1(v\leq u+t_n\alpha(u))-\mathbbm 1(v\leq u)],
	\end{equation*}	
	so that
	\begin{equation*}
		\zeta_n(u)=\int_0^1\chi_n(u,v)b(v)\,\mathrm{d}v
	\end{equation*}
	for each $u\in[0,1]$. Then
	\begin{align}
		\lVert\zeta_n-b\alpha\rVert_1&\leq \int_0^1\left\vert b(u)\int_0^1\chi_n(u,v)\,\mathrm{d}v-b(u)\alpha(u)\right\vert\mathrm{d}u\notag\\
		&\quad\quad+\int_0^1\left\vert\int_0^1\chi_n(u,v)(b(v)-b(u))\,\mathrm{d}v\right\vert\mathrm{d}u.\label{eq:comp2terms}
	\end{align}
	We now show that the first term on the right-hand side converges to zero. Use the change-of-variables $v\mapsto u+t_nw$ to obtain, for each $u\in[0,1]$,
	\begin{multline}
		\int_0^1\chi_n(u,v)\,\mathrm{d}v=\int_{-u/t_n}^{(1-u)/t_n}\chi_n(u,u+t_nw)\,t_n\,\mathrm{d}w\\
		=\int_{-u/t_n}^{(1-u)/t_n}[\mathbbm 1(w\leq\alpha(u))-\mathbbm 1(w\leq0)]\,\mathrm{d}w.\label{eq:compterm1}
	\end{multline}
	The final integral converges to $\alpha(u)$ for each $u\in(0,1)$ and is bounded by $\lVert\alpha\rVert_\infty$. Consequently
	\begin{equation*}
		b(u)\int_0^1\chi_n(u,v)\,\mathrm{d}v-b(u)\alpha(u)
	\end{equation*}	
	converges to zero for each $u\in(0,1)$ and is bounded by $2\lVert\alpha\rVert_\infty\lvert b(u)\rvert$, an integrable function of $u$. Thus an application of the dominated convergence theorem shows that the first term on the right-hand side of \eqref{eq:comp2terms} converges to zero. It remains to show that the second term converges to zero.
	
	Fix $\epsilon>0$. The continuous maps from $[0,1]$ to $\mathbb R$ are dense in $L^1[0,1]$, so there exists a continuous $b_\epsilon:[0,1]\to\mathbb R$ such that $\lVert b_\epsilon-b\rVert_1\leq\epsilon$. Use Fubini's theorem to bound the second term on the right-hand side of \eqref{eq:comp2terms} by
	\begin{multline}
		\int_0^1\left(\int_0^1\lvert\chi_n(u,v)\rvert\,\mathrm{d}v\right)\lvert b_\epsilon(u)-b(u)\rvert\,\mathrm{d}u\\+\int_0^1\left(\int_0^1\lvert\chi_n(u,v)\rvert\,\mathrm{d}u\right)\lvert b_\epsilon(v)-b(v)\rvert\,\mathrm{d}v\\+\int_0^1\int_0^1\lvert\chi_n(u,v)\rvert\,\lvert b_\epsilon(u)-b_\epsilon(v)\rvert\,\mathrm{d}v\,\mathrm{d}u.\label{eq:comptriangle}
	\end{multline}
	The first term is bounded by $\lVert\alpha\rVert_\infty\epsilon$ because we have $\smallint_0^1\lvert\chi_n(u,v)\rvert\,\mathrm{d}v\leq\lVert\alpha\rVert_\infty$ by applying the change-of-variables $v\mapsto u+t_nw$ as in \eqref{eq:compterm1}. To bound the second term we use the change-of-variables $u\mapsto v-t_nw$ to obtain
	\begin{align*}
		\int_0^1\lvert\chi_n(u,v)\rvert\,\mathrm{d}u&=\int_{(v-1)/t_n}^{v/t_n}\lvert\chi_n(v-t_nw,v)\rvert\,t_n\,\mathrm{d}w\\&\leq\int_{-\infty}^\infty\lvert\mathbbm 1(w\leq\alpha(v-t_nw))-\mathbbm 1(w\leq0)\rvert\,\mathrm{d}w.
	\end{align*}
	The final integrand can be nonzero only if $\lvert w\rvert\leq\lVert\alpha\rVert_\infty$. Since $\alpha$ is uniformly continuous there exists an integer $N_{\epsilon,1}$ such that $\lvert\alpha(v-t_nw)-\alpha(v)\rvert\leq\epsilon$ for all $n\geq N_{\epsilon,1}$, all $v\in[0,1]$ and all $w\in\mathbb R$ such that $\lvert w\rvert\leq\lVert\alpha\rVert_\infty$. Consequently
	\begin{align*}
		&\int_{-\infty}^\infty\lvert\mathbbm 1(w\leq\alpha(v-t_nw))-\mathbbm 1(w\leq0)\rvert\,\mathrm{d}w\\&\quad\leq\int_{-\infty}^\infty\mathbbm 1\big([0\wedge(\alpha(v)-\epsilon)]\leq w\leq[0\vee(\alpha(v)+\epsilon)]\big)\,\mathrm{d}w\\&\quad\leq\lvert\alpha(v)\rvert+2\epsilon\leq\lVert\alpha\rVert_\infty+2\epsilon
	\end{align*}
	for all $n\geq N_{\epsilon,1}$ and all $v\in[0,1]$. The second term in \eqref{eq:comptriangle} is therefore bounded by $\lVert\alpha\rVert_\infty\epsilon+2\epsilon^2$ for all $n\geq N_{\epsilon,1}$. To bound the third term, observe that since $b_\epsilon$ is uniformly continuous there exists a real number $\delta_{\epsilon}>0$ such that $\lvert b_\epsilon(u)-b_\epsilon(v)\rvert\leq\epsilon$ whenever $\lvert u-v\rvert\leq\delta_{\epsilon}$. Further observe that $\chi_n(u,v)$ can be nonzero only if $\lvert u-v\rvert\leq t_n\lVert\alpha\rVert_\infty$. Therefore there exists an integer $N_{\epsilon,2}$ such that, for all $n\geq N_{\epsilon,2}$, the third term in \eqref{eq:comptriangle} is bounded by $\epsilon\smallint_0^1\smallint_0^1\lvert\chi_n(u,v)\rvert\,\mathrm{d}v\,\mathrm{d}u$. Thus, since $\smallint_0^1\lvert\chi_n(u,v)\rvert\,\mathrm{d}v\leq\lVert\alpha\rVert_\infty$, the third term is bounded by $\lVert\alpha\rVert_\infty\epsilon$ for all $n\geq N_{\epsilon,2}$.
	
	Combining the preceding bounds shows that the second term on the right-hand side of \eqref{eq:comp2terms} is at most $3\lVert\alpha\rVert_\infty\epsilon+2\epsilon^2$ for all sufficiently large $n$. Therefore, since $\epsilon$ may be chosen arbitrarily small, the second term converges to zero.
\end{proof}

Lemma \ref{lem:comp} supplies the only nonstandard differentiability argument needed for the proof of Theorem \ref{thm:sufficiency}. The remainder of the proof combines Donsker's theorem with the delta-method and its bootstrap analogue.

\begin{proof}[Proof of Theorem \ref{thm:sufficiency}]
	Let $\{(U_i,V_i)\}_{i=1}^\infty$ be an iid sequence of pairs of uniform random variables such that the bivariate distribution of each pair $(U_i,V_i)$ is given by the copula $C$. Let $C_n:[0,1]^2\to[0,1]$ be the bivariate empirical cdf for $\{(U_i,V_i)\}_{i=1}^n$. Let $\tilde{F}_n:[0,1]\to[0,1]$ and $\tilde{G}_n:[0,1]\to[0,1]$ be the empirical cdfs for $\{U_i\}_{i=1}^n$ and $\{V_i\}_{i=1}^n$, respectively. Let $\tilde{Q}_n:(0,1)\to[0,1]$ be the empirical qf for $\{V_i\}_{i=1}^n$. The classical Donsker theorem establishes that
	\begin{equation*}
		\sqrt{n}(C_n-C)\rightsquigarrow\mathcal B\,\,\,\text{in }\ell^\infty([0,1]^2).
	\end{equation*}	
	An application of the continuous mapping theorem using the map $h(u,v)\mapsto(h(1,v),h(R(u),1))$ from $\ell^\infty([0,1]^2)$ into $\ell^\infty[0,1]\otimes\ell^\infty[0,1]$ therefore shows that
	\begin{equation}\label{eq:donskerjoint}
		\sqrt{n}((\tilde{G}_n,\tilde{F}_n\circ R)-(I,R))\rightsquigarrow(\mathcal B_2,\mathcal B_1\circ R)\,\,\,\text{in }\ell^\infty[0,1]\otimes\ell^\infty[0,1],
	\end{equation}
	where $\mathcal B_1(\cdot)=\mathcal B(\cdot,1)$ and $\mathcal B_2(\cdot)=\mathcal B(1,\cdot)$. Now an application of the delta-method using the generalized inverse map, suitable Hadamard differentiability of which is supplied by Lemma 21.4(ii) in \citet{V98}, shows that
	\begin{equation*}
		\sqrt{n}((\tilde{Q}_n,\tilde{F}_n\circ R)-(I,R))\rightsquigarrow(-\mathcal B_2,\mathcal B_1\circ R)\,\,\,\text{in }\ell^\infty[0,1]\otimes\ell^\infty[0,1].
	\end{equation*}
	Apply the delta-method again using the composition map $\phi$ defined above. Lemma \ref{lem:comp} supplies the requisite Hadamard differentiability. This yields
	\begin{equation}\label{eq:conv1}
		\sqrt{n}(\tilde{F}_n\circ R\circ\tilde{Q}_n-R)\rightsquigarrow\mathcal B_1\circ R-r\mathcal B_2=\mathcal R\quad\text{in }L^1[0,1].
	\end{equation}
	
	It remains to replace $\tilde{F}_n\circ R\circ\tilde{Q}_n$ with $R_n$ in the preceding display. Let $P:(0,1)\to\mathbb R$ be the qf for $X_i$. The function $\tilde{F}_n\circ F$ is the empirical cdf for $\{P(U_i)\}_{i=1}^n$ because
	\begin{equation*}
		\tilde{F}_n(F(x))=\frac{1}{n}\sum_{i=1}^n\mathbbm 1(U_i\leq F(x))=\frac{1}{n}\sum_{i=1}^n\mathbbm 1(P(U_i)\leq x).
	\end{equation*}
	The function $Q\circ \tilde{Q}_n$ is the empirical qf for $\{Q(V_i)\}_{i=1}^n$ because applying $Q$ to the order statistics of $\{V_i\}_{i=1}^n$ produces the order statistics of $\{Q(V_i)\}_{i=1}^n$. We have
	\begin{multline}\label{eq:eqd}
		\big((P(U_1),Q(V_1)),\ldots,(P(U_n),Q(V_n))\big)\\\eqd\big((X_1,Y_1),\ldots,(X_n,Y_n)\big)
	\end{multline}
	because the distribution of each pair $(P(U_i),Q(V_i))$ has margins $F$ and $G$ and admits the copula $C$. Therefore
	\begin{equation*}
		(\tilde{F}_n\circ F)\circ(Q\circ\tilde{Q}_n)\eqd F_n\circ Q_n
	\end{equation*}
	as random elements of $L^1[0,1]$. Since
	\begin{equation*}
		(\tilde{F}_n\circ F)\circ(Q\circ\tilde{Q}_n)=\tilde{F}_n\circ R\circ\tilde{Q}_n\quad\text{and}\quad F_n\circ Q_n=R_n
	\end{equation*}
	on $(0,1)$, we deduce that $\tilde{F}_n\circ R\circ\tilde{Q}_n\eqd R_n$ as random elements of $L^1[0,1]$, which justifies replacing $\tilde{F}_n\circ R\circ\tilde{Q}_n$ with $R_n$ in \eqref{eq:conv1}. This proves the first assertion of the theorem.
	
	The proof of the second assertion is similar. Define $C_n^\ast:[0,1]^2\to[0,1]$, $\tilde{F}_n^\ast:[0,1]\to[0,1]$ and $\tilde{G}_n^\ast:[0,1]\to[0,1]$ by
	\[C_n^\ast(u,v)=\frac{1}{n}\sum_{i=1}^nW_{i,n}\mathbbm 1(U_i\leq u,V_i\leq v),\]
	\[\tilde{F}_n^\ast(u)=C_n^\ast(u,1),\quad\tilde{G}_n^\ast(v)=C_n^\ast(1,v).\]
	Let $\tilde{Q}_n^\ast$ be the qf corresponding to $\tilde{G}_n^\ast$. The bootstrap version of Donsker's theorem yields
	\begin{multline*}
		\sqrt{n}(C_n^\ast-C_n)\rightsquigarrow\mathcal B\quad\text{in }\ell^\infty([0,1]^2)\\\text{conditionally on } \{(U_i,V_i)\}_{i=1}^\infty\text{ in probability.}
	\end{multline*}
	Applying the continuous mapping theorem and then the delta-method twice as above yields
	\begin{multline}\label{eq:conv2}
		\sqrt{n}(\tilde{F}_n^\ast\circ R\circ\tilde{Q}_n^\ast-\tilde{F}_n\circ R\circ\tilde{Q}_n)\rightsquigarrow\mathcal R\quad\text{in }L^1[0,1]\\\text{conditionally on } \{(U_i,V_i)\}_{i=1}^\infty\text{ in probability.}
	\end{multline}
	Fix an arbitrary bounded Lipschitz map $h:L^1[0,1]\to\mathbb R$. For $z=((x_1,y_1),\ldots,(x_n,y_n))\in(\mathbb R^2)^n$, let $T_n(z)\in L^1[0,1]$ denote the P-P plot based on the sample $z$, and for a bootstrap weight vector $w\in\mathbb R^n$ let $T_n^\ast(z,w)\in L^1[0,1]$ denote the corresponding bootstrap P-P plot. Define
	\begin{equation*}
		\Gamma_n(z)=\mathbb E\big[h\big(\!\sqrt n(T_n^\ast(z,W_n)-T_n(z))\big)\big].
	\end{equation*}
	Then
	\begin{multline*}
		\Gamma_n\big((X_1,Y_1),\ldots,(X_n,Y_n)\big)\\=\mathbb E\big[h\big(\!\sqrt n(R_n^\ast-R_n)\big)\,\big|\,\{(X_i,Y_i)\}_{i=1}^\infty\big]
	\end{multline*}
	and
	\begin{multline*}
		\Gamma_n\big((P(U_1),Q(V_1)),\ldots,(P(U_n),Q(V_n))\big)\\=\mathbb E\big[h\big(\!\sqrt{n}(\tilde{F}_n^\ast\circ R\circ\tilde{Q}_n^\ast-\tilde{F}_n\circ R\circ\tilde{Q}_n)\big)\,\big|\,\{(U_i,V_i)\}_{i=1}^\infty\big]
	\end{multline*}
	a.s. By \eqref{eq:conv2}, the final conditional expectation converges in probability to $\mathbb Eh(\mathcal R)$. In view of \eqref{eq:eqd}, it follows that
	\begin{equation*}
		\mathbb E\big[h\big(\!\sqrt n(R_n^\ast-R_n)\big)\,\big|\,\{(X_i,Y_i)\}_{i=1}^\infty\big]\to\mathbb Eh(\mathcal R)
	\end{equation*}
	in probability. Since $h$ was arbitrary, this proves the second assertion.
\end{proof}

\section{Necessity of absolute continuity}\label{sec:necessity}

We now show that convergence in distribution in $L^1[0,1]$ cannot hold unless $R$ is absolutely continuous. For clarity we state this as a separate theorem.

\begin{theorem}[Necessity]\label{thm:necessity}
	If $R$ is not absolutely continuous then $\sqrt{n}(R_n-R)$ does not converge in distribution in $L^1[0,1]$.
\end{theorem}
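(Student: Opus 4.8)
The plan is to argue by contradiction, showing that $\sqrt n(R_n-R)$ fails a non-concentration property shared by every sequence that converges in distribution in $L^1[0,1]$. For $\delta>0$ let $F_\delta(f)=\sup\{\int_A|f|\,\mathrm{d}\lambda:\lambda(A)\le\delta\}$, a $1$-Lipschitz functional on $L^1[0,1]$ ($\lambda$ being Lebesgue measure). For each fixed $f\in L^1[0,1]$, absolute continuity of the integral gives $F_\delta(f)\downarrow0$ as $\delta\downarrow0$; hence if $Z_n\rightsquigarrow Z$ in $L^1[0,1]$ then, by the continuous mapping theorem and the portmanteau lemma, $\limsup_nP(F_\delta(Z_n)>\epsilon)\le P(F_\delta(Z)\ge\epsilon)\to0$ as $\delta\downarrow0$, for every $\epsilon>0$. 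A contradiction then follows as soon as one exhibits $\epsilon_0,c_0>0$ with $\liminf_nP(F_\delta(\sqrt n(R_n-R))>\epsilon_0)\ge c_0$ for \emph{every} $\delta>0$.

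So suppose $\sqrt n(R_n-R)\rightsquigarrow\mathbb X$ in $L^1[0,1]$. As in the proof of Theorem \ref{thm:sufficiency}, $R_n$ and $\tilde F_n\circ R\circ\tilde Q_n$ are identically distributed in $L^1[0,1]$ — and that identity requires no regularity of $R$ — so it suffices to work with $\tilde F_n\circ R\circ\tilde Q_n$. Let $R=R_a+R_s$ be the Lebesgue decomposition of the nondecreasing function $R$, with $R_a(u)=R(0)+\int_0^u r$, $r=R'$ a.e., and $R_s$ nondecreasing with $R_s'=0$ a.e.; failure of absolute continuity of $R$ says precisely that the Lebesgue--Stieltjes measure $\mu$ of $R_s$ is nonzero. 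Decompose $\sqrt n(\tilde F_n\circ R\circ\tilde Q_n-R)=A_n+B_n^{\mathrm{ac}}+B_n^{\mathrm s}$, where $A_n=[\sqrt n(\tilde F_n\circ R-R)]\circ\tilde Q_n$, $B_n^{\mathrm{ac}}=\sqrt n(R_a\circ\tilde Q_n-R_a)$ and $B_n^{\mathrm s}=\sqrt n(R_s\circ\tilde Q_n-R_s)$. One checks that $A_n\rightsquigarrow\mathcal B_1\circ R$ in $L^1[0,1]$ — using Donsker's theorem for $\tilde F_n$, the extended continuous mapping theorem, and $\lambda$-a.e.\ continuity of $R$ — with $\lVert A_n\rVert_\infty\le\sqrt n\lVert\tilde F_n-I\rVert_\infty=O_{\mathrm P}(1)$, and that $B_n^{\mathrm{ac}}\rightsquigarrow-r\mathcal B_2$ in $L^1[0,1]$, by the delta-method with Lemma \ref{lem:comp} applied to the absolutely continuous part $R_a$ alone. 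Since $F_\delta(A_n)\le\delta\lVert A_n\rVert_\infty$, since the convergent sequence $\{B_n^{\mathrm{ac}}\}$ obeys the bound of the first paragraph, and since $F_\delta$ is subadditive and nondecreasing in $\delta$, it suffices to prove the $\liminf$ bound with $\sqrt n(R_n-R)$ replaced by $B_n^{\mathrm s}$.

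This is the heart of the argument. The measure $\mu$ is singular with respect to $\lambda$ and puts no mass at $0$ or $1$ (as $R$, hence $R_s$, is continuous at the endpoints by construction), so by inner regularity we may write $\mu=\mu_1+\mu_2$ with $\mu_1$ supported on a \emph{compact} $\lambda$-null set $E\subset[0,1]$ and $\mu_2([0,1])$ as small as desired. Let $B_n^{\mathrm s}=B_n^{\mathrm{s},1}+B_n^{\mathrm{s},2}$ be the corresponding splitting of $R_s$. On the event $\{\sqrt n\lVert\tilde Q_n-I\rVert_\infty\le M\}$ the interval between $u$ and $\tilde Q_n(u)$ has length at most $M/\sqrt n$, so $B_n^{\mathrm{s},1}$ vanishes outside the $M/\sqrt n$-neighbourhood of $E$ — a set whose Lebesgue measure tends to $\lambda(E)=0$ — while $\lVert B_n^{\mathrm{s},2}\rVert_1\le 2M\mu_2([0,1])$. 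On the other hand, using $\tilde Q_n(u)>t\iff\tilde G_n(t)<u$ and Tonelli's theorem,
\[
\lVert B_n^{\mathrm s}\rVert_1=\sqrt n\int_0^1\bigl|R_s(\tilde Q_n(u))-R_s(u)\bigr|\,\mathrm{d}u\ \ge\ \int_{[0,1]}\bigl|e_n(t)\bigr|\,\mathrm{d}\mu(t),
\]
where $e_n=\sqrt n(\tilde G_n-I)$ is the uniform empirical process, and the same bound holds with $\mu,B_n^{\mathrm s}$ replaced by $\mu_1,B_n^{\mathrm{s},1}$. Since $e_n\rightsquigarrow\mathcal B_2$ and $h\mapsto\int|h|\,\mathrm{d}\mu$ is Lipschitz on $\ell^\infty[0,1]$, the right-hand side converges in distribution to $S=\int_{[0,1]}|\mathcal B_2|\,\mathrm{d}\mu$, strictly positive almost surely because $\mathcal B_2$ has continuous paths and $\mathrm{supp}\,\mu$ meets $(0,1)$. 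Fixing $\epsilon_0$ with $P(S>4\epsilon_0)>3/4$, then $M$ large, then $\mu_2([0,1])$ small, these estimates combine to produce $c_0>0$ with $\liminf_nP(F_\delta(B_n^{\mathrm s})>\epsilon_0)\ge c_0$ for every $\delta>0$: on the relevant high-probability event, $B_n^{\mathrm{s},1}$ eventually places mass exceeding $\epsilon_0$ on a set of Lebesgue measure at most $\delta$, and $B_n^{\mathrm{s},2}$ contributes only a negligible $L^1$ error.

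The step I expect to be the main obstacle is the lower bound $\lVert B_n^{\mathrm s}\rVert_1\ge\int|e_n|\,\mathrm{d}\mu$ (and its $\mu_1$ version): one obtains it by passing from the empirical quantile function $\tilde Q_n$ to the empirical distribution function $\tilde G_n$ through the change of variables implicit in $\tilde Q_n(u)>t\iff\tilde G_n(t)<u$ and then applying Tonelli's theorem. Conceptually, the point worth stressing is that the failure of $L^1$-convergence is caused not by the $L^1$ norms exploding — the norms $\lVert B_n^{\mathrm s}\rVert_1$ stay bounded in probability — but by a bounded amount of mass concentrating onto shrinking neighbourhoods of a $\lambda$-null set, and it is this that forces the reduction, via inner regularity, to a compact null set carrying nearly all of $\mu$.
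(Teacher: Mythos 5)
Your argument is correct in outline, but it is a genuinely different proof from the one in the paper. The paper also reduces to $\tilde F_n\circ R\circ\tilde Q_n$ and strips off the $\tilde F_n$ layer via the DKW inequality, but from there it proceeds quantitatively: using uniform integrability of $\sqrt n\lVert R_n-R\rVert_1$ (again via DKW) it upgrades the assumed distributional convergence to convergence in mean, imports a lower bound on $\liminf_n\mathbb E\sqrt n\smallint_a^b\lvert F(Q_n(u))-R(u)\rvert\,\mathrm du$ from Lemma 3.4 of the companion paper \citet{BK26} to obtain the inequality $\mathbb E\smallint_{[a,b]}\lvert\mathcal B_2\rvert\,\mathrm dR\leq\mathbb E\smallint_a^b\lvert\mathcal S\rvert+\sqrt{\pi/2}\,(b-a)$, and then verifies the $\epsilon$--$\delta$ definition of absolute continuity of $\mu_R$ directly from that inequality (truncating $\mathcal S$ and cutting off neighbourhoods of $0$ and $1$ where $\mathbb E\lvert\mathcal B_2(u)\rvert$ degenerates). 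You instead use a soft tightness criterion --- a weakly convergent sequence in $L^1[0,1]$ cannot keep a nonvanishing amount of $L^1$ mass on sets of Lebesgue measure tending to zero --- together with the Lebesgue decomposition $R=R_a+R_s$: the absolutely continuous and $\tilde F_n$ layers are harmless (the latter by the sup-norm bound, the former because it converges in $L^1$ by the same delta-method machinery as in the sufficiency proof), while the singular layer, via the switching identity $\lVert\sqrt n(R_s\circ\tilde Q_n-R_s)\rVert_1=\smallint\lvert e_n\rvert\,\mathrm d\mu$ and inner regularity, places a stochastically nonvanishing amount of mass on shrinking neighbourhoods of a compact null set. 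What the paper's route buys is a short, self-contained chain of expectations needing no decomposition of $R$ and no re-use of the quantile/composition machinery inside the necessity proof (at the price of invoking the external lemma from \citet{BK26}); what your route buys is a structural explanation of \emph{why} convergence fails --- the singular part concentrates --- and independence from both Lemma \ref{lem:ui} and the companion paper.

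Two small points to tighten if you write this up. First, the claim that $S=\smallint\lvert\mathcal B_2\rvert\,\mathrm d\mu>0$ a.s.\ needs more than path continuity and $\operatorname{supp}\mu$ meeting $(0,1)$, since $\mu$ is singular and could in principle charge a Lebesgue-null set; the clean argument is Fubini: $\mathbb E\,\mu(\{t:\mathcal B_2(t)=0\})=\smallint P(\mathcal B_2(t)=0)\,\mathrm d\mu(t)=0$, so a.s.\ $\mu(\{\mathcal B_2=0\})=0<\mu((0,1))$. Second, the switching identity should be stated with care about endpoint conventions when $\mu$ has atoms (one gets $\smallint\lvert\tilde G_n(t-)-t\rvert\,\mathrm d\mu(t)$ rather than $\smallint\lvert\tilde G_n(t)-t\rvert\,\mathrm d\mu(t)$, which differ by at most $\mu((0,1))/\sqrt n$ after rescaling); you correctly flag this as the delicate step, and the discrepancy is harmless for the lower bound you need.
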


The proof is similar in spirit to the necessity argument in \cite{BK26}, but the P-P setting requires a separate treatment because the process involves composition of the empirical cdf with the empirical quantile function. We begin with two lemmas. Let $\bar R:(0,1)\to[0,1]$ be the left-continuous version of $R$ defined by $\bar R(u)=\sup_{v\in(0,u)}R(v)$. This construction makes $\bar R$ a valid qf. In fact $\bar R$ is the qf for each $F(Y_i)$.
\begin{lemma}\label{lem:Rbar}
	$\bar{R}$ is the qf for each $F(Y_i)$ and $F\circ Q_n$ is the empirical qf for $\{F(Y_i)\}_{i=1}^n$.
\end{lemma}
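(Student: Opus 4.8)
The plan is to establish the two assertions separately, starting with the one about the empirical quantile function, which is essentially bookkeeping. Writing $Y_{(1)}\le\cdots\le Y_{(n)}$ for the order statistics of $\{Y_i\}_{i=1}^n$, the empirical qf is the left-continuous step function $Q_n(u)=Y_{(\lceil nu\rceil)}$, so $F\circ Q_n$ takes the value $F(Y_{(k)})$ on $((k-1)/n,k/n]$. Since $F$ is nondecreasing, $F(Y_{(1)})\le\cdots\le F(Y_{(n)})$ is a sorted rearrangement of $F(Y_1),\dots,F(Y_n)$; hence $F(Y_{(k)})$ is the $k$-th order statistic of $\{F(Y_i)\}_{i=1}^n$, and $F\circ Q_n$ is exactly its empirical qf. The only point to check is that $F\circ Q_n$ is left-continuous at the breakpoints $k/n$, which holds because $Q_n$ is left-continuous there and $F$ sits on the outside of the composition.

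For the assertion about $\bar R$, I would argue via the quantile transform together with uniqueness of quantile functions. Let $U$ be uniform on $(0,1)$; then $Q(U)\eqd Y_i$, so $R(U)=F(Q(U))\eqd F(Y_i)$. Because $R$ is nondecreasing it has at most countably many discontinuities, and $\bar R(u)=R(u-)=R(u)$ at each continuity point $u$, so $\{\bar R\ne R\}$ is Lebesgue-null and hence $\bar R(U)=R(U)$ almost surely, giving $\bar R(U)\eqd F(Y_i)$. Since $\bar R$ is, by its construction as a left-continuous modification of $R$, nondecreasing and left-continuous, it is the quantile function of the law of $\bar R(U)$ --- a nondecreasing left-continuous $\psi:(0,1)\to\mathbb R$ is always the quantile function of the law of $\psi(U)$, since $\{u:\psi(u)\le t\}$ is an interval $(0,c_t]$ with $c_t=P(\psi(U)\le t)$, which is precisely the switching relation characterising a quantile function. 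Hence $\bar R$ is the qf of $F(Y_i)$.

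The one genuine subtlety --- and the reason the lemma is worth recording --- is why $R=F\circ Q$ itself is not the qf of $F(Y_i)$: since $F$ is right-continuous whereas quantile functions are left-continuous, $F\circ Q$ can fail left-continuity precisely at those $u$ for which $Q(u)$ is an atom of the law of $X_i$, so one must pass to $\bar R$. The route above dispatches this cleanly, since the distributional identity $R(U)\eqd F(Y_i)$ is insensitive to modifying $R$ on a null set and there is a unique left-continuous representative among nondecreasing functions with that property. If a more self-contained argument were wanted, I would instead verify the switching relation $\bar R(u)\le t\iff P(F(Y_i)\le t)\ge u$ directly from the relation $Q(u)\le y\iff u\le G(y)$, distinguishing the cases where $\{y:F(y)\le t\}$ is closed or half-open on the right; that is elementary but somewhat longer.
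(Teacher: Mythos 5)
Your proposal is correct and follows essentially the same route as the paper: the paper's proof is the one-line chain $\bar{R}(U)\eqas R(U)=F(Q(U))\eqd F(Y_i)$ (relying on the observation, stated just before the lemma, that the left-continuous modification $\bar{R}$ is a valid qf), together with the remark that applying the nondecreasing $F$ to the order statistics of $\{Y_i\}_{i=1}^n$ yields the order statistics of $\{F(Y_i)\}_{i=1}^n$. You simply spell out the two details the paper leaves implicit --- that $\{\bar R\neq R\}$ is countable, and that a nondecreasing left-continuous function is the qf of its own pushforward law via the switching relation --- both of which are handled correctly.
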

\begin{proof}
	Let $U$ be a random variable distributed uniformly on $(0,1)$. Then $\bar{R}(U)\eqas R(U)=F(Q(U))\eqd F(Y_i)$. This shows that $\bar{R}$ is the qf for each $F(Y_i)$. The second assertion of the lemma is true because applying $F$ to the order statistics of $\{Y_i\}_{i=1}^n$ produces the order statistics of $\{F(Y_i)\}_{i=1}^n$.
\end{proof}
\begin{lemma}\label{lem:ui}
	$\{\!\sqrt{n}\lVert R_n-R\rVert_1\}_{n=1}^\infty$ is uniformly integrable.
\end{lemma}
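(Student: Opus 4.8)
The plan is to dominate $\sqrt n\lVert R_n-R\rVert_1$ above by a sum of two rescaled Kolmogorov--Smirnov statistics and then invoke the Dvoretzky--Kiefer--Wolfowitz inequality, using three elementary facts: a sequence of random variables bounded in $L^2$ uniformly in $n$ is uniformly integrable, the sum of two uniformly integrable sequences is uniformly integrable, and a nonnegative sequence dominated by a uniformly integrable sequence is uniformly integrable.

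First I would pass to a norm over $(0,1)$ and replace $R$ with its left-continuous version: since $R=F\circ Q$ is nondecreasing it has at most countably many discontinuities, so $R=\bar R$ a.e.\ and $\lVert R_n-R\rVert_1=\lVert F_n\circ Q_n-\bar R\rVert_1$. Adding and subtracting $F\circ Q_n$ and using $\lvert F_n-F\rvert\le 1$ gives
\[
\lVert R_n-R\rVert_1\le\lVert(F_n-F)\circ Q_n\rVert_1+\lVert F\circ Q_n-\bar R\rVert_1\le\lVert F_n-F\rVert_\infty+\lVert F\circ Q_n-\bar R\rVert_1 .
\]
The first term is the Kolmogorov--Smirnov statistic for $\{X_i\}_{i=1}^n$. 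For the second term, Lemma \ref{lem:Rbar} identifies $F\circ Q_n$ as the empirical quantile function of the iid sample $\{F(Y_i)\}_{i=1}^n$ and $\bar R$ as its population quantile function. The sup norm of the difference of an empirical and a population quantile function need not be small, so I would instead use the classical identity equating the $L^1$ distance between two quantile functions with the $L^1$ distance between the corresponding cumulative distribution functions (the $1$-Wasserstein / Kantorovich representation on $\mathbb R$): writing $H_n$ and $H$ for the empirical and population cdfs of $\{F(Y_i)\}$, which are supported in $[0,1]$,
\[
\lVert F\circ Q_n-\bar R\rVert_1=\int_0^1\lvert H_n(x)-H(x)\rvert\,\mathrm dx\le\lVert H_n-H\rVert_\infty ,
\]
which is the Kolmogorov--Smirnov statistic for $\{F(Y_i)\}_{i=1}^n$. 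The identity itself follows from Tonelli's theorem applied to $\lvert a-b\rvert=\int_{\mathbb R}\lvert\mathbbm 1(a>x)-\mathbbm 1(b>x)\rvert\,\mathrm dx$ together with the quantile--cdf duality $\{Q(u)>x\}=\{F(x)<u\}$.

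Combining these bounds yields $\sqrt n\lVert R_n-R\rVert_1\le\sqrt n\lVert F_n-F\rVert_\infty+\sqrt n\lVert H_n-H\rVert_\infty$. By the Dvoretzky--Kiefer--Wolfowitz inequality each summand on the right obeys a tail bound of the form $P(\,\cdot\,>\lambda)\le 2e^{-2\lambda^2}$, uniformly in $n$, hence is bounded in $L^2$ uniformly in $n$ and therefore uniformly integrable; then so is their sum, and $\{\sqrt n\lVert R_n-R\rVert_1\}_{n=1}^\infty$, being nonnegative and dominated by that sum, is uniformly integrable as well. I expect the only delicate point to be the second paragraph: one must track the left/right-continuity conventions for quantile functions when invoking Lemma \ref{lem:Rbar} and the $L^1$ quantile--cdf identity, and note that it is precisely the passage from quantile functions to cdfs that rescues the argument (the analogue with the uniform norm in place of $\lVert\cdot\rVert_1$ would fail). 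Everything else is routine.
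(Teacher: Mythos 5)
Your proposal is correct and follows essentially the same route as the paper: the same three-part decomposition via $F\circ Q_n$ and $\bar R$, the same use of Lemma \ref{lem:Rbar} together with the $L^1$ cdf--quantile identity to reduce the middle term to $\lVert H_n-H\rVert_\infty$, and the same appeal to the Dvoretzky--Kiefer--Wolfowitz inequality for uniform integrability. The only difference is cosmetic: you spell out the $L^2$-boundedness argument that the paper cites as a well-known consequence of DKW.
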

\begin{proof}
	Let $H:[0,1]\to[0,1]$ be the cdf for each $F(Y_i)$ and let $H_n:[0,1]\to[0,1]$ be the empirical cdf for $\{F(Y_i)\}_{i=1}^n$. The $L^1$-distance between any two cdfs is equal to the $L^1$-distance between the corresponding two qfs. It therefore follows from Lemma \ref{lem:Rbar} that $\lVert H_n-H\rVert_1=\lVert F\circ Q_n-\bar{R}\rVert_1$. Observe that
	\begin{align*}
		\sqrt{n}\rVert R_n-R\rVert_1&\leq\sqrt{n}\lVert F_n\circ Q_n-F\circ Q_n\rVert_1+\sqrt{n}\lVert F\circ Q_n-\bar{R}\rVert_1\\&\quad\quad+\sqrt{n}\lVert \bar{R}-R\rVert_1.
	\end{align*}
	The first term is bounded by $\sqrt{n}\lVert F_n-F\rVert_\infty$, the second term is equal to $\sqrt{n}\lVert H_n-H\rVert_1$ and thus bounded by $\sqrt{n}\lVert H_n-H\rVert_\infty$, and the third term is zero. Therefore it suffices that
	\begin{equation*}
		\{\!\sqrt{n}\lVert F_n-F\rVert_\infty\}_{n=1}^\infty\quad\text{ and }\quad\{\!\sqrt{n}\lVert H_n-H\rVert_\infty\}_{n=1}^\infty
	\end{equation*}
	are uniformly integrable. This is a well-known consequence of the Dvoretzky--Kiefer--Wolfowitz inequality \citep[p.~268]{V98}.
\end{proof}
\begin{proof}[Proof of Theorem \ref{thm:necessity}]
	We prove the contrapositive. Let $\mathcal S$ be a random element of $L^1[0,1]$ and assume that $\sqrt{n}(R_n-R)\rightsquigarrow\mathcal S$ in $L^1[0,1]$. Let $\mathcal B_2:[0,1]\to\mathbb R$ be a Brownian bridge. We will establish the bound
	\begin{multline}\label{eq:openineqR}
		\mathbb E\int_{[a,b)}\lvert \mathcal B_2(u)\rvert\,\mathrm{d}R(u)\leq\mathbb E\int_a^b\lvert\mathcal S(u)\rvert\,\mathrm{d}u+\sqrt{\frac{\pi}{2}}(b-a)\\\text{for all continuity points }a,b\text{ of }R\text{ with }0<a<b<1.
	\end{multline}
	The first integral is defined in the Lebesgue-Stieltjes sense, wherein a Borel measure $\mu_R$ on $(0,1)$ is generated by assigning measure $\bar{R}(b)-\bar{R}(a)$ to each interval $[a,b)$. Observe that
	\begin{multline*}
		\mathbb E\int_a^b\lvert F(Q_n(u))-R(u)\rvert\,\mathrm{d}u\leq\mathbb E\int_a^b\lvert R_n(u)-R(u)\rvert\,\mathrm{d}u\\+\mathbb E\int_a^b\lvert F_n(Q_n(u))-F(Q_n(u))\rvert\,\mathrm{d}u.
	\end{multline*}
	Consequently, to establish the inequality in \eqref{eq:openineqR} it suffices to verify that
	\begin{equation}
		\lim_{n\to\infty}\mathbb E\sqrt{n}\int_a^b\lvert R_n(u)-R(u)\rvert\,\mathrm{d}u=\mathbb E\int_a^b\lvert\mathcal S(u)\rvert\,\mathrm{d}u,\label{eq:Rverify1}
	\end{equation}
	that
	\begin{equation}
		\limsup_{n\to\infty}\mathbb E\sqrt{n}\int_a^b\lvert F_n(Q_n(u))-F(Q_n(u))\rvert\,\mathrm{d}u\leq\sqrt{\frac{\pi}{2}}(b-a),\label{eq:Rverify2}
	\end{equation}
	and that
	\begin{multline}
		\liminf_{n\to\infty}\mathbb E\sqrt{n}\int_a^b\lvert F(Q_n(u))-R(u)\rvert\,\mathrm{d}u\\\geq\mathbb E\int_{[a,b)}\lvert \mathcal B_2(u)\rvert\,\mathrm{d}R(u).\label{eq:Rverify3}
	\end{multline}
	An application of the continuous mapping theorem shows that $\sqrt{n}\smallint_a^b\lvert R_n(u)-R(u)\rvert\,\mathrm{d}u\rightsquigarrow\smallint_a^b\lvert\mathcal S(u)\rvert\,\mathrm{d}u$. Convergence in distribution implies convergence in mean due to the uniform integrability established in Lemma \ref{lem:ui}. This verifies \eqref{eq:Rverify1} and also shows that $\mathbb E\smallint_0^1\lvert\mathcal S(u)\rvert\,\mathrm{d}u<\infty$. Observe that
	\begin{equation*}
		\int_a^b\lvert F_n(Q_n(u))-F(Q_n(u))\rvert\,\mathrm{d}u\leq\lVert F_n-F\rVert_\infty(b-a).
	\end{equation*}	
	We have $\mathbb E\sqrt{n}\lVert F_n-F\rVert_\infty\leq\sqrt{\pi/2}$ by the Dvoretzky--Kiefer--Wolfowitz inequality. This verifies \eqref{eq:Rverify2}. Lemma 3.4 in \cite{BK26}, applied to the sample $\{F(Y_i)\}_{i=1}^n$ and combined with Lemma \ref{lem:Rbar} above, yields \eqref{eq:Rverify3} whenever $a$ and $b$ are continuity points of $R$.
	
	Next we strengthen \eqref{eq:openineqR} by showing that
	\begin{multline}\label{eq:openineqR2}
		\mathbb E\int_{[a,b]}\lvert \mathcal B_2(u)\rvert\,\mathrm{d}R(u)\leq\mathbb E\int_a^b\lvert\mathcal S(u)\rvert\,\mathrm{d}u+\sqrt{\frac{\pi}{2}}(b-a)\\\text{for all }a,b\in(0,1)\text{ such that }a<b.
	\end{multline}
	Let $\{a_n\}_{n=1}^\infty$ and $\{b_n\}_{n=1}^\infty$ be sequences of continuity points of $R$ such that $0<a_n<b_n<1$ for each $n$ and such that $a_n\uparrow a$ and $b_n\downarrow b$. Then Fatou's lemma and \eqref{eq:openineqR} respectively justify the inequalities
	\begin{multline*}
		\mathbb E\int_{[a,b]}\lvert \mathcal B_2(u)\rvert\,\mathrm{d}R(u)\leq\liminf_{n\to\infty}\mathbb E\int_{[a_n,b_n)}\lvert \mathcal B_2(u)\rvert\,\mathrm{d}R(u)\\\leq\liminf_{n\to\infty}\left(\mathbb E\int_{a_n}^{b_n}\lvert\mathcal S(u)\rvert\,\mathrm{d}u+\sqrt{\frac{\pi}{2}}(b_n-a_n)\right).
	\end{multline*}
	Since $\mathbb E\smallint_0^1\lvert\mathcal S(u)\rvert\,\mathrm{d}u<\infty$, an application of the dominated convergence theorem yields \eqref{eq:openineqR2}.
	
	Finally we use \eqref{eq:openineqR2} to show that $R$ is absolutely continuous on $[0,1]$. It suffices to show absolute continuity on $(0,1)$ because $R$ is continuous at zero and one by construction. Let $\lambda$ be the Lebesgue measure on $(0,1)$, let $\mu_R$ be the Lebesgue-Stieltjes measure on $(0,1)$ generated by $R$, and let $\mathcal J$ be the collection of all finite unions of closed intervals with endpoints in $(0,1)$. By definition, $R$ is absolutely continuous on $(0,1)$ if for every $\epsilon>0$ there exists $\delta_\epsilon>0$ such that $\mu_R(A)\leq\epsilon$ for every $A\in\mathcal J$ such that $\lambda(A)\leq\delta_\epsilon$. For every $A\in\mathcal J$ and every $\eta\in(0,1/2)$ we have			
	\begin{align*}
		\mu_R(A)&=\int_A\mathbbm 1(\eta\leq u\leq1-\eta)\,\mathrm{d}R(u)+\int_A\mathbbm 1(u<\eta)\,\mathrm{d}R(u)\\
		&\quad\quad+\int_A\mathbbm 1(u>1-\eta)\,\mathrm{d}R(u)\\
		&\leq\frac{1}{\sqrt{\eta(1-\eta)}}\int_A\sqrt{u(1-u)}\,\mathrm{d}R(u)\\
		&\quad\quad+\left(\bar{R}(\eta)-\lim_{u\downarrow0}\bar{R}(u)\right)+\left(\lim_{u\uparrow1}\bar{R}(u)-\bar{R}(1-\eta)\right).
	\end{align*}
	Fix $\epsilon>0$. Since $\lim_{u\downarrow0}\bar{R}(u)$ and $\lim_{u\uparrow1}\bar{R}(u)$ are finite there exists $\eta_\epsilon\in(0,1/2)$ such that, for every $A\in\mathcal J$,
	\begin{equation*}
		\mu_R(A)\leq\frac{1}{\sqrt{\eta_\epsilon(1-\eta_\epsilon)}}\int_A\sqrt{u(1-u)}\,\mathrm{d}R(u)+\frac{\epsilon}{3}.
	\end{equation*}
	We have $\mathbb E\lvert\mathcal B_2(u)\rvert=\sqrt{(2/\pi)u(1-u)}$ because $\mathcal B_2(u)$ is normally distributed with mean zero and variance $u(1-u)$. Therefore an application of Fubini's theorem shows that
	\begin{equation*}
		\mu_R(A)\leq\sqrt{\frac{\pi}{2\eta_\epsilon(1-\eta_\epsilon)}}\,\mathbb E\int_A\lvert\mathcal B_2(u)\rvert\,\mathrm{d}R(u)+\frac{\epsilon}{3}.
	\end{equation*}
	Since $A$ is a finite union of closed intervals with endpoints in $(0,1)$, an application of the bound in \eqref{eq:openineqR2} shows that
	\begin{equation*}
		\mu_R(A)\leq\sqrt{\frac{\pi}{2\eta_\epsilon(1-\eta_\epsilon)}}\left(\mathbb E\int_A\lvert\mathcal S(u)\rvert\,\mathrm{d}u+\sqrt{\frac{\pi}{2}}\lambda(A)\right)+\frac{\epsilon}{3}.
	\end{equation*}
	For each $n\in\mathbb N$ we have
	\begin{align*}
		\mathbb E\int_A\lvert\mathcal S(u)\rvert\,\mathrm{d}u&=\mathbb E\int_A\mathbbm 1\big(\lvert\mathcal S(u)\rvert\leq n\big)\lvert\mathcal S(u)\rvert\,\mathrm{d}u\\
		&\quad\quad+\mathbb E\int_A\mathbbm 1\big(\lvert\mathcal S(u)\rvert>n\big)\lvert\mathcal S(u)\rvert\,\mathrm{d}u\\
		&\leq n\lambda(A)+\mathbb E\int_0^1\mathbbm 1\big(\lvert\mathcal S(u)\rvert>n\big)\lvert\mathcal S(u)\rvert\,\mathrm{d}u.
	\end{align*}
	Since $\mathbb E\smallint_0^1\lvert\mathcal S(u)\rvert\,\mathrm{d}u<\infty$ the dominated convergence theorem shows that $\mathbb E\smallint_0^1\mathbbm 1\big(\lvert\mathcal S(u)\rvert>n\big)\lvert\mathcal S(u)\rvert\,\mathrm{d}u\to0$. Therefore there exists an integer $N_\epsilon$ not depending on $A$ such that
	\begin{equation*}
		\mathbb E\int_A\lvert\mathcal S(u)\rvert\,\mathrm{d}u\leq N_\epsilon\lambda(A)+\sqrt{\frac{2\eta_\epsilon(1-\eta_\epsilon)}{\pi}}\cdot\frac{\epsilon}{3}.
	\end{equation*}	
	Consequently
	\begin{equation*}
		\mu_R(A)\leq\sqrt{\frac{\pi}{2\eta_\epsilon(1-\eta_\epsilon)}}\left(N_\epsilon\lambda(A)+\sqrt{\frac{\pi}{2}}\lambda(A)\right)+\frac{2\epsilon}{3}.
	\end{equation*}
	Now if we set
	\begin{equation*}
		\delta_\epsilon=\frac{\sqrt{2\eta_\epsilon(1-\eta_\epsilon)}}{3\sqrt{\pi}(N_\epsilon+\sqrt{\pi/2})}\,\epsilon
	\end{equation*}
	then we have $\mu_R(A)\leq\epsilon$ for every $A\in\mathcal J$ such that $\lambda(A)\leq\delta_\epsilon$. This establishes that $R$ is absolutely continuous.
\end{proof}

\section{Independent samples with different sizes}\label{sec:indep}

The foregoing analysis extends with only minor modifications to the case in which the P-P process is constructed from two independent samples of different sizes. To do this we introduce a nondecreasing function $m:\mathbb N\to\mathbb N$. Instead of defining $F_n$ to be the empirical cdf for $\{X_i\}_{i=1}^n$, define it to be the empirical cdf for $\{X_i\}_{i=1}^{m(n)}$. Assume that $C$ is the product copula. In this setting $R_n$ is the P-P plot based on two independent samples of sizes $m(n)$ and $n$. If we also assume that
\begin{equation*}
	m(n)\to\infty\quad\text{and}\quad\frac{n}{m(n)+n}\to\rho\quad\text{for some }\rho\in[0,1)
\end{equation*}
then Theorem \ref{thm:iff} remains true in this modified setting. The proof of Theorem \ref{thm:necessity} requires only minor adjustments to keep track of the constant $\rho$. The first assertion of Theorem \ref{thm:sufficiency} remains true provided that $\mathcal R$ is defined to be
\begin{equation*}
	\mathcal R(u)=\sqrt{\frac{\rho}{1-\rho}}\mathcal B_1(R(u))-r(u)\mathcal B_2(u),
\end{equation*}
where $\mathcal B_1$ and $\mathcal B_2$ are independent Brownian bridges. In the proof of the first assertion we should define $\tilde{F}_n$ to be the empirical cdf for $\{U_i\}_{i=1}^{m(n)}$, so that
\begin{equation*}
	\sqrt{n}\big((\tilde{F}_n,\tilde{G}_n)-(I,I)\big)\rightsquigarrow\left(\sqrt{\frac{\rho}{1-\rho}}\mathcal B_1,\mathcal B_2\right)
\end{equation*}
in $\ell^\infty[0,1]\otimes\ell^\infty[0,1]$ by Donsker's theorem. An application of the continuous mapping theorem then yields \eqref{eq:donskerjoint} with $\mathcal B_1$ replaced by $\sqrt{\rho/(1-\rho)}\mathcal B_1$. The remainder of the argument is unchanged.

With independent samples the implementation of the bootstrap is naturally modified so that independent copies of the bootstrap weight vector $W_n$ are separately used to construct $F_n^\ast$ and $G_n^\ast$. The second assertion of Theorem \ref{thm:sufficiency} remains true with this implementation of the bootstrap and with the modified definition of $\mathcal R$. The proof extends in the same way as the proof of the first assertion.

\end{document}